\documentclass[a4paper,reqno,oneside]{amsart}


\usepackage{ifthen}
\usepackage[utf8]{inputenc}
\usepackage[english]{babel}
\usepackage{amsmath,amsfonts,amssymb,amsthm}
\usepackage{graphicx}
\usepackage{xcolor}
\usepackage{mathtools}
\usepackage{hyperref}
\usepackage{enumerate}

\newtheorem{theorem}{Theorem}

\newtheorem{corollary}[theorem]{Corollary}

\theoremstyle{definition}

\theoremstyle{remark}


\newcommand{\ignore}[1]{} 




\newcommand{\eps}{\varepsilon}
\renewcommand{\phi}{\varphi}



\def\canrightarrow{\rightarrow\hspace*{-0.48cm}\hbox to 0.48cm{\raisebox{-0.5ex}{{\tiny can}}}}
\def\ordrightarrow{\rightarrow\hspace*{-0.48cm}\hbox to 0.48cm{\raisebox{-0.5ex}{{\tiny ord}}}}
\def\partrightarrow{\longrightarrow\hspace*{-0.6cm}\hbox to 0.48cm{\raisebox{0.9ex}{{\tiny part}}}\hspace*{0.2cm}}

\def\nDeltaeq{\mathrel{\ooalign{
  $\triangleq$\cr
  \hidewidth\raise.2ex\hbox{\scalebox{0.65}{$\not\mkern18mu$}}\cr}}}

\let\eps=\varepsilon




\newcounter{thmlistctr}







\usepackage{stmaryrd}
\def\ex{\text{ex}}


\title{On the number of graphs without large cliques}

\begin{document}

\begin{center}

\LARGE On the number of graphs without large cliques
\vspace{7mm}

\Large{
\begin{center}
\begin{tabular}{ccccc}
Frank Mousset &\quad& Rajko Nenadov &\quad& Angelika Steger
\end{tabular}
\end{center}
}
\vspace{7mm}

\large
  Institute of Theoretical Computer Science \\
  ETH Z\"urich, 8092 Z\"urich, Switzerland \\
  {\small\{{\tt moussetf|rnenadov|steger}\}{\tt @inf.ethz.ch}}
\vspace{5mm}

\small

\begin{minipage}{0.9\linewidth}
\textsc{Abstract.}

In 1976 Erd\H{o}s, Kleitman and Rothschild determined the number of graphs without a clique of size $\ell$.
In this note we extend their result to the case of forbidden cliques of increasing size. 
More precisely we prove that for $\ell_n \le (\log n)^{1/4}/2$ there are $$2^{(1-1/(\ell_n-1))n^2/2+o(n^2/\ell_n)}$$ $K_{\ell_n}$-free graphs of order $n$.
Our proof is based on the recent hypergraph container theorems of Saxton, Thomason and Balogh, Morris, Samotij, in combination with a theorem of Lov\'asz and Simonovits.
\end{minipage}
\vspace{5mm}
\end{center}

\section{Introduction}

Let $F$ be an arbitrary graph. A graph $G$ is called $F$-free if $G$ does not contain $F$ as a (weak) subgraph. 
Let $f_n(F)$ denote the number of (labeled) $F$-free graphs on $n$ vertices. As every subgraph of an $F$-free graph is also $F$-free, we trivially have $f_n(F)\ge 2^{\ex(n,F)}$, where $\ex(n,F)$ denotes the maximum number of edges of an $F$-free graph on $n$ vertices. It is well known \cite{t41,es46,ES66} that
$$\ex(n,F)= \left(1-\frac1{\chi(F)-1}\right)\frac{n^2}2 + o(n^2).$$ 
Erd\H{o}s, Kleitman and Rothschild~\cite{ekr76} showed that in the case of cliques, i.e., for $F=K_\ell$, this lower bound actually provides the correct order of magnitude. 
Erd\H{o}s, Frankl and R\"odl~\cite{EFR} later showed that a similar result holds for all graphs $F$ of chromatic number $\chi(F)\ge 3$:
\begin{equation}\label{eq:fixedl}
{f_n(F)} = 2^{(1+o(1))\ex(n,F)}.
\end{equation}\label{eq:struc}
Note that these results just provide the asymptotics of $\log_2(f_n(F))$. Extending an earlier result from~\cite{ekr76} for triangles, Kolaitis, Pr\"omel and Rothschild~\cite{KPR} determined the typical structure of $K_{\ell}$-free graphs by showing that almost all of them are $(\ell-1)$-colorable. Thus,
\begin{equation}
f_n(K_\ell) = (1+o(1))\cdot\text{col}_n(\ell-1),
\end{equation}
where $\text{col}_n(\ell)$ denotes the number of (labeled) $\ell$-colorable graphs on $n$ vertices. An asymptotic for $\text{col}_n(\ell)$ is given in~\cite{PS95}. For additional results and further pointers to the literature see e.g.~\cite{ABBM11, bbs04, BS11, PS92}.

All of the above  results consider the case of a {\em fixed} forbidden graph $F$. Much less is known if the size of the forbidden graph $F$ increases with the size of the host graph $G$. The study of such 
situations was started only recently by Bollobás and Nikiforov~\cite{BollobasNikiforov10}. They showed that for any sequence $(F_n)$ of graphs with $v(F_n)=o(\log{n})$ one has
\begin{equation}\label{eq:bollobas-nikiforov}
  \log_2{f_n(F_n)} =\left(1-\frac{1}{\chi(F_n)-1}\right)\binom{n}{2}+o(n^2)\text.
\end{equation}
It is interesting to note that the proof of
\eqref{eq:bollobas-nikiforov} completely avoids the use of the regularity lemma, a common tool for attacking this kind of questions. Indeed, because of the tower-type dependence of the size of an $\varepsilon$-regular partition on the parameter $\varepsilon$ (see \cite{Gowers97}), it seems hard to adapt the regularity-based proof of \eqref{eq:fixedl} to the case of forbidden subgraphs of non-constant size. 
Furthermore, observe that \eqref{eq:bollobas-nikiforov} is only non-trivial if the chromatic number $\chi(F_n)$ is bounded. In particular, it does not determine $\log_2 f_n(K_{\ell_n})$
for an increasing sequence $\ell_n$ of positive integers, because the term $o(n^2)$ {swallows} the lower-order term $\binom{n}{2}/(\ell_n-1)$.

The aim of this paper is to provide the first non-trivial result for forbidden cliques of increasing size:
\begin{theorem}
  \label{thm:1}
  Let $(\ell_n)_{n\in\mathbb{N}}$ be a sequence of positive integers such that for every $n\in \mathbb N$,
  we have $3\leq \ell_n\le(\log{n})^{1/4}/2$. Then
  $$ \log_2{f_n(K_{\ell_n})}= \left(1-\frac{1}{\ell_n-1}\right)\binom{n}{2}+o(n^2/\ell_n).$$ 
\end{theorem}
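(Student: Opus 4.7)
The plan is to establish the upper and lower bounds on $f_n(K_{\ell_n})$ separately. The lower bound is immediate from Tur\'an's theorem: $T_{\ell_n-1}(n)$ is $K_{\ell_n}$-free and has $\ex(n,K_{\ell_n})=(1-1/(\ell_n-1))\binom{n}{2}-O(\ell_n)$ edges, so every one of its $2^{\ex(n,K_{\ell_n})}$ subgraphs is $K_{\ell_n}$-free, giving $\log_2 f_n(K_{\ell_n})\ge (1-1/(\ell_n-1))\binom{n}{2}-o(n^2/\ell_n)$.

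For the upper bound I would apply the hypergraph container method of Saxton-Thomason and Balogh-Morris-Samotij to the auxiliary $\binom{\ell_n}{2}$-uniform hypergraph $\mathcal{H}$ whose vertex set is $E(K_n)$ and whose hyperedges are the edge sets of $K_{\ell_n}$-copies in $K_n$. Its independent sets are precisely the edge sets of $K_{\ell_n}$-free graphs on $[n]$, and its codegrees are easy to control (the number of copies of $K_{\ell_n}$ through a fixed set of $j$ edges that span $v$ vertices and lie in a common clique equals $\binom{n-v}{\ell_n-v}$). With the internal parameters tuned to the growth of $\ell_n$, the container theorem produces a family $\mathcal{C}$ of graphs on $[n]$ such that (i) every $K_{\ell_n}$-free graph on $[n]$ is a subgraph of some $C\in\mathcal{C}$; (ii) each $C\in\mathcal{C}$ contains at most $\varepsilon\binom{n}{\ell_n}$ copies of $K_{\ell_n}$, for a suitably small $\varepsilon=\varepsilon(n,\ell_n)$; and (iii) $\log_2|\mathcal{C}|=o(n^2/\ell_n)$.

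The second ingredient is a supersaturation estimate. The Lov\'asz-Simonovits theorem asserts that every graph on $n$ vertices with $\ex(n,K_{\ell_n})+q$ edges ($q\ge 0$) contains at least roughly $q\cdot(n/(\ell_n-1))^{\ell_n-2}$ copies of $K_{\ell_n}$. Combined with (ii), this forces
\[ q\;\le\;\varepsilon\binom{n}{\ell_n}\left(\frac{\ell_n-1}{n}\right)^{\ell_n-2}\;=\;O\!\left(\varepsilon\cdot\frac{n^2\, e^{\ell_n}}{\ell_n^{5/2}}\right), \]
so provided $\varepsilon$ is chosen as a small enough multiple of $\ell_n^{3/2}e^{-\ell_n}$, every container $C$ satisfies $e(C)\le\ex(n,K_{\ell_n})+o(n^2/\ell_n)$. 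Summing,
\[ f_n(K_{\ell_n})\;\le\;\sum_{C\in\mathcal{C}}2^{e(C)}\;\le\;|\mathcal{C}|\cdot 2^{\ex(n,K_{\ell_n})+o(n^2/\ell_n)}\;=\;2^{(1-1/(\ell_n-1))\binom{n}{2}+o(n^2/\ell_n)}, \]
which is the claimed upper bound.

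The main obstacle is the simultaneous achievement of (ii) and (iii). Supersaturation forces $\varepsilon$ to decay at least like $e^{-\ell_n}$, while the container theorem's bound on $\log_2|\mathcal{C}|$ scales roughly like $n^{2-\alpha(\ell_n)}\log(1/\varepsilon)$, with $\alpha(\ell_n)$ determined by the codegree structure of $\mathcal{H}$. Checking that for $\ell_n\le(\log n)^{1/4}/2$ this bound still fits into $o(n^2/\ell_n)$---possibly after iterating the container theorem or adapting it to exploit the specific structure of clique hypergraphs---is the technical heart of the argument, and the threshold $(\log n)^{1/4}/2$ appearing in the statement is precisely the point at which this balance breaks.
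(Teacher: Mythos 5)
Your proposal matches the paper's proof in every essential respect: the same Tur\'an-graph lower bound, the same container argument applied to the $\binom{\ell_n}{2}$-uniform hypergraph of $K_{\ell_n}$-copies in $K_n$, and the same appeal to Lov\'asz--Simonovits supersaturation, including the correct observation that $\varepsilon$ must decay roughly like $e^{-\ell_n}$. The one step you flag but do not carry out---showing that the container count can nonetheless be kept at $o(n^2/\ell_n)$---is precisely where the paper does its work: it takes $p=n^{-(\log\ell_n)/(2\ell_n^2)}$, bounds the codegrees by $\Delta_j\le n^{\ell_n-1/2-\sqrt{2j}}$ (since $j$ edges span more than $\tfrac12+\sqrt{2j}$ vertices), and verifies that $\Delta(H,p)$ is small enough for the Saxton--Thomason theorem to apply; this calculation is exactly where the hypothesis $\ell_n\le(\log n)^{1/4}/2$ enters.
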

Our proof is based on the recent powerful hypergraph container results of Balogh, Morris, Samotij~\cite{bms12}
and Saxton and Thomason~\cite{SaxtonThomason12}. 

The upper bound on $\ell_n$ in our theorem is an artifact of our proof. We have no reason to believe that this bound is tight. In fact, it is not unconceivable that the statement from Theorem~\ref{thm:1} holds up  to 
the size of a maximal clique in the random graph $G_{n,1/2}$ which is known to be $(2+o(1))\log_2 n$.
We leave this question to future research.

Note also that, similarly to the result of Erd\H{o}s, Kleitman and Rothschild~\cite{ekr76}, our theorem just provides the asymptotics of the logarithm of $f_n(K_{\ell_n})$. However, our paper has already stimulated further research,
and very recently a
 structural result in the spirit of Kolaitis, Pr\"omel and Rothschild
has been established by Balogh et al.~\cite{BLS}.

\section{Hypergraph Containers}

In the proof of Theorem \ref{thm:1}, we make use of the hypergraph container theorem proved independently by Saxton and Thomason~\cite{SaxtonThomason12} and Balogh, Morris and Samotij~\cite{bms12}. Before we state this theorem, we introduce some notation.

Let $H$ be an $r$-uniform hypergraph with the average degree $d$. Then for every $\sigma \subseteq V(H)$, we define
the \emph{co-degree}
$$ d(\sigma) = |\{e \in E(H) : \sigma \subseteq e\}|. $$
Moreover, for every $j \in [r]$, we define the $j$-th \emph{maximum co-degree}
$$ \Delta_j = \max{\{d(\sigma) : \sigma\subseteq V(H)\text{ and }|\sigma|=j\}}. $$
Finally, for any $p\in(0,1)$, we define the function
$$ \Delta(H,p) = 2^{\binom{r}{2}-1}\sum_{j=2}^{r}2^{-\binom{j-1}{2}}\frac{\Delta_j}{d p^{j-1}}. $$
We will use the following version of the general hypergraph container
theorem. \begin{theorem}[{Saxton-Thomason~\cite{SaxtonThomason12}}]
  \label{thm:container}
  There exists a positive integer $c$ such that the following holds for all
  positive integers $r$ and $N$. Let $H$ be an
  $r$-uniform hypergraph of order $N$. Let
  $0\leq p\leq 1/(cr^{2r})$ and $0< \varepsilon< 1$ be such that $\Delta(H,p)\leq \varepsilon/(cr^r)$. Then there exists a collection $\mathcal{C}\subseteq \mathcal{P}(V(H))$ such that
  \begin{enumerate}[(i)]
  \item every independent set in $H$ is contained in some $ C\in \mathcal{C}$,
  \item for all $C\in\mathcal{C}$, we have $e(H[C]) \leq \varepsilon e(H)$, and
  \item the number $|\mathcal{C}|$ of containers satisfies
   $$ \log{|\mathcal{C}|} \leq cr^{3r}(1+\log(1/\eps))Np\log(1/p). $$
  \end{enumerate}
\end{theorem}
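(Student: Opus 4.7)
The plan is to construct, for every independent set $I\subseteq V(H)$, a small \emph{fingerprint} $T=T(I)\subseteq I$ together with a container $C=C(T)\supseteq I$ where $C$ depends only on $T$. Bounding the number of containers then reduces to bounding the number of possible fingerprints: if every $T$ has size at most $s$, then $|\mathcal C|\le\binom{N}{\le s}$, and we aim to arrange $s$ so that $s\log N$ matches the right-hand side of~(iii). Properties~(i) and~(ii) will be guaranteed by the construction of $C$.

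The fingerprint is produced by a deterministic algorithm, best organized as a recursion on the uniformity $r$. Maintain an ``available'' set $A\subseteq V(H)$, initially $V(H)$. At each step, either (a)~move a vertex $v\in I\cap A$ of high current (link-)degree from $A$ into $T$, or (b)~identify a vertex $u\in A$ whose co-degree with $T$ is ``too large'', conclude via independence of $I$ that $u\notin I$, and remove $u$ from $A$. For the base case $r=2$, ``large co-degree with $T$'' just means adjacency to some $v\in T$, and the procedure reduces to greedy max-degree selection. For $r\ge 3$, detecting large co-degree requires recursing into the $(r-1)$-uniform link hypergraph $H_v=\{e\setminus\{v\}:v\in e\in E(H)\}$ at each $v\in T$. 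The algorithm terminates once $H[A]$ has at most $\eps e(H)$ edges, and outputs $C:=A\cup T$.

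The quantity $\Delta(H,p)$ is calibrated precisely for this recursion: at level $j\in\{2,\dots,r\}$ one picks a threshold $\theta_j\approx dp^{j-1}/2^{\binom{j-1}{2}}$ for what counts as ``too large'' co-degree, and the hypothesis $\Delta(H,p)\le\eps/(cr^r)$ translates into $\Delta_j\lesssim \eps\theta_j$, which is what is needed both to keep the step-(b) removals sparse and to drive the edge count below $\eps e(H)$ after all $r-1$ recursion levels are processed. A potential-function argument tracking $\sum_{\sigma\subseteq T}d(\sigma)p^{|\sigma|}$ bounds the number of step-(a) selections by $O(r^{3r}Np\log(1/p))$, with the factor $1+\log(1/\eps)$ arising from the iterated halving of $e(H[A])$.

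The main obstacle is the scheduling of thresholds across levels: $\theta_2,\dots,\theta_r$ must be coupled so that the errors from level~$j$ (both vertices removed in step~(b) and edges not yet killed off) propagate controllably into level~$j+1$. The factor $2^{\binom{j-1}{2}}$ in $\Delta(H,p)$ is forced by counting sub-edges of size $j-1$ inside $T$ that can combine with a fresh candidate vertex, and ensuring that the savings at successive levels compound multiplicatively rather than exponentially is the technical core of~\cite{SaxtonThomason12}; without this delicate scheduling one gets a bound on $|T|$ with a doubly-exponential-in-$r$ overhead instead of the $r^{3r}$ above.
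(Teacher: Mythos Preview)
The paper does not prove Theorem~\ref{thm:container} at all: it is quoted as a black box from Saxton and Thomason~\cite{SaxtonThomason12}, with only the one-line remark that it follows from their Theorem~5.3 exactly as their Corollary~2.7 is derived, the sole difference being that one keeps track of the dependence on the uniformity $r$ rather than absorbing it into an unspecified constant. So there is nothing to compare your argument against on the paper's side.

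What you have written is a reasonable high-level outline of the actual Saxton--Thomason algorithm (fingerprints built by greedy high-degree selection, recursion into link hypergraphs, calibrated co-degree thresholds $\theta_j$, and a potential-function bound on the fingerprint size). As a sketch of \emph{their} proof it is broadly faithful, but it is not itself a proof: the claims about how the thresholds couple, why the factor $2^{\binom{j-1}{2}}$ arises, and why the overhead is $r^{3r}$ rather than doubly exponential are all asserted rather than argued, and you explicitly defer the ``technical core'' to~\cite{SaxtonThomason12}. One small imprecision: you write that $|\mathcal C|\le\binom{N}{\le s}$ yields a bound of order $s\log N$, but to match the stated $Np\log(1/p)$ one needs the sharper entropy bound $\log\binom{N}{\le s}\lesssim s\log(N/s)$ with $s\approx Np$; also, in the Saxton--Thomason scheme the fingerprint is really a \emph{tuple} of small sets (one per recursion level), not a single set, which affects the counting. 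If your goal was to reproduce what the present paper does, simply citing~\cite{SaxtonThomason12} and noting the explicit $r$-dependence is all that is required.
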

Theorem \ref{thm:container} is an easy consequence of Theorem 5.3 in the paper of Saxton and
Thomason~\cite{SaxtonThomason12}; it is derived exactly as Corollary 2.7 (also therein), the only difference being that
we are precise about the dependence on the edge size $r$. We also mention that
our notation deviates slightly from that used in~\cite{SaxtonThomason12}: we write $p$ instead of $\tau$ and we use $\Delta(H,p)$
as an upper bound for the function $\delta(H,\tau)$ used by Saxton and Thomason. Finally,
let us just note without further explanation that
Theorem~\ref{thm:container} is much weaker than
the general container theorem, although it is sufficient for the purposes of this note (and is simpler to state and apply).

As a corollary of Theorem~\ref{thm:container} we prove the following version tailored for a collection of $K_{\ell}$-free graphs, with $\ell$ being a function of $n$.

\begin{corollary}
  \label{cor:container}
  For every constant $\delta>0$
  and sequence $(\ell_n)_{n\in\mathbb N}$ such that $3\le \ell_n\le(\log{n})^{1/4}/2$, the following holds
  for all large enough $n\in\mathbb N$: there
  exists a collection $\mathcal{G}$ of graphs of order $n$ such that
  \begin{enumerate}[(i)]
  \item every $K_{\ell}$-free graph of order $n$ is a subgraph of some $G \in \mathcal{G}$,
  \item every $G\in\mathcal{G}$ contains at most $\delta\binom{n}{\ell_n}/e^{\ell_n}$ copies of $K_{\ell_n}$, and
  \item the number $|\mathcal{G}|$ of graphs in the collection satisfies
    $$ \log{|\mathcal{G}|} \leq \delta n^2/\ell_n. $$
  \end{enumerate}
\end{corollary}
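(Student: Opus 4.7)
The plan is to apply Theorem~\ref{thm:container} to the $r$-uniform hypergraph $H$ whose vertex set is $E(K_n)$ and whose hyperedges are precisely the edge sets of copies of $K_\ell$ in $K_n$, where $\ell:=\ell_n$ and $r:=\binom{\ell}{2}$. By construction, an independent set in $H$ is a $K_\ell$-free graph of order $n$, and for any $C\subseteq V(H)$, the quantity $e(H[C])$ equals the number of copies of $K_\ell$ in the graph with edge set $C$. So setting $\varepsilon:=\delta/e^{\ell}$ and identifying each container with the corresponding graph immediately delivers (i) and (ii) of the corollary. It remains to choose $p$ so that the hypotheses $p\leq 1/(cr^{2r})$ and $\Delta(H,p)\leq \varepsilon/(cr^r)$ are met, and then to verify that conclusion (iii) of Theorem~\ref{thm:container} implies (iii) of the corollary.

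For the co-degree estimates, let $\sigma\subseteq E(K_n)$ with $|\sigma|=j$ span $s$ vertices; then $d(\sigma)\leq \binom{n-s}{\ell-s}$, and the maximum over $j$-sets is attained at $s_j:=\min\{s:\binom{s}{2}\geq j\}$, which grows like $\sqrt{2j}$. Using the identity $\binom{n-s}{\ell-s}/\binom{n}{\ell}=\binom{\ell}{s}/\binom{n}{s}$ together with $d=r\binom{n}{\ell}/\binom{n}{2}$, a routine calculation gives
$$\frac{\Delta_j}{d\,p^{j-1}} \leq \frac{\binom{\ell}{s_j}\binom{n}{2}}{r\binom{n}{s_j}\,p^{j-1}}.$$
Taking $p:=C\ell\log n/n$ for a sufficiently large $C=C(\delta)$, the right-hand side decays rapidly in $j$ (the gain $n^{-(s_j-2)}$ easily beats the loss $p^{-(j-1)}$ since $s_j\geq \sqrt{2j}$), so the weighted sum defining $\Delta(H,p)$ is dominated by its $j=2$ term and the bound $\Delta(H,p)\leq \varepsilon/(cr^r)$ can be verified directly. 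The prerequisite $p\leq 1/(cr^{2r})$ reduces to $\ell^{\ell^2+O(1)}\log n\leq n/c'$, which is comfortably implied by $\ell\leq(\log n)^{1/4}/2$.

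With these choices, conclusion (iii) of Theorem~\ref{thm:container} yields
$$\log|\mathcal{G}|\leq cr^{3r}\bigl(1+\log(1/\varepsilon)\bigr)Np\log(1/p)\leq C''\,\ell^{3\ell^2+3}\,n\log^2 n,$$
using $r^{3r}\leq \ell^{3\ell^2}$, $\log(1/\varepsilon)=O(\ell)$, $N=\binom{n}{2}$, $p=C\ell\log n/n$, and $\log(1/p)=O(\log n)$. To obtain (iii) of the corollary, divide by $n^2/\ell$ and take logarithms; this reduces to the inequality $(3\ell^2+O(1))\log\ell+2\log\log n\leq \log n-O(1)$, which holds for $n$ large enough since $\ell^2\log\ell\leq (\log n)^{1/2}(\log\log n)/4=o(\log n)$ by the assumption $\ell\leq(\log n)^{1/4}/2$.

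The main obstacle is the careful bookkeeping of all the $\ell$-dependent factors in $\Delta(H,p)$, in particular verifying that the sum over $j$ is indeed dominated by its $j=2$ term so that the relatively mild choice $p\approx \ell\log n/n$ suffices. The other constraint, the factor $r^{3r}\sim \ell^{3\ell^2}$ inherited from Theorem~\ref{thm:container}, is what ultimately forces the upper bound $\ell\leq(\log n)^{1/4}/2$ in the hypothesis.
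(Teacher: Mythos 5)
Your setup (the hypergraph $H$, the translation of containers to graphs, and $\varepsilon=\delta/e^{\ell}$) matches the paper, but the choice $p = C\ell\log n/n$ is far too small and the verification of $\Delta(H,p)\leq\varepsilon/(cr^{r})$ breaks down already at $j=3$. Using your own identity, $\frac{\Delta_j}{dp^{j-1}}\approx \frac{\binom{\ell}{s_j}\binom{n}{2}}{r\binom{n}{s_j}p^{j-1}}$, so for $j=3$ (which still has $s_3=3$) this is about $\ell/(np^2)\approx n/(C^2\ell\log^2 n)\to\infty$, and more generally, up to $\ell$- and $\log$-dependent factors, the $j$-th term behaves like $n^{\,2-s_j}\cdot p^{-(j-1)}\approx n^{\,j+1-s_j}$. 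Since $s_j\approx\sqrt{2j}$ grows much more slowly than $j$, these terms \emph{grow} in $j$ rather than decay, so the claim that "$n^{-(s_j-2)}$ easily beats the loss $p^{-(j-1)}$" and that the $j=2$ term dominates is wrong. The correct choice is much larger: the paper takes $p=n^{-(\log\ell)/(2\ell^{2})}$, a power of $n$ with a tiny negative exponent (so $p$ is close to $1$). With that $p$ one shows $\sqrt{2j}-(j-1)(\log\ell)/(2\ell^{2})\geq 2-\tfrac{1}{2e}$ for all $2\leq j\leq\binom{\ell}{2}$, giving a uniform bound $\Delta_j/(dp^{j-1})\leq n^{-1/4}$ for every $j$, and then the entire sum is small because $e^{\ell^{4}}n^{-1/4}\to 0$ under the hypothesis $\ell\leq(\log n)^{1/4}/2$.

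A secondary point: your closing remark attributes the bound $\ell\leq(\log n)^{1/4}/2$ to the $r^{3r}\sim\ell^{3\ell^{2}}$ factor in the container-count estimate, but that factor only requires $\ell^{2}\log\ell\lesssim\log n$, which is weaker. The real bottleneck (as the paper notes after the proof) is the prefactor $2^{\binom{r}{2}}\sim e^{\Theta(\ell^{4})}$ in $\Delta(H,p)$, which forces $\ell^{4}\lesssim\log n$. Finally, the consequent bound $\log|\mathcal{G}|\lesssim\ell^{3\ell^{2}+3}n\log^{2}n$ that you derive from your $p$ would be comfortably $o(n^{2}/\ell)$ if it were valid, but it rests on an inadmissible choice of $p$; with the correct $p$ one instead obtains $\log|\mathcal{G}|\leq n^{2}e^{-\Omega(\ell^{2}\log\ell)}$, which is still amply sufficient.
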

\begin{proof}
  Let us assume that $n$ is large enough and write $\ell:=\ell_n$.

  Let $H$ be a hypergraph defined as follows: the vertex set of $H$ is the edge set of $K_n$, and the edges of $H$ are the edge sets of subgraphs of $K_{n}$ isomorphic to $K_{\ell}$. Observe that the graph $H$ is an $\binom{{\ell}}{2}$-uniform hypergraph of order $\binom{n}{2}$ with $e(H)=\binom{n}{{\ell}}$.
  With some foresight, we would like to apply Theorem~\ref{thm:container} with 
  \begin{equation}\varepsilon = \delta e^{-{\ell}} \quad \text{and} \quad p = n^{- (\log{\ell})/(2\ell^2)}\label{eq:p}\end{equation}
  to the hypergraph $H$. We first verify that this is indeed possible, that is, that
  \begin{equation} \label{eq:delta_c}
    \Delta(H, p) \leq \delta \left(c\cdot e^{\ell}\binom{\ell}{2}^{\binom{\ell}{2}}\right)^{-1}
  \end{equation}
  and
  \begin{equation} \label{eq:p_c}
    p \leq \left( c\cdot \binom{\ell}{2}^{2\binom{\ell}{2}}\right)^{-1}\text,
  \end{equation}
  for every positive integer constant $c$.
  
  Let us start with the values $\Delta_j$ for $H$. Consider some $\sigma\subseteq V(H)$ with $|\sigma|=j$, where $1\leq j \leq \binom{\ell}{2}$. We can view $\sigma$ as a subgraph of $K_n$ with $v(\sigma) = \left| \bigcup \{e:e\in \sigma\} \right|$ vertices and $|\sigma|=j$ edges. The co-degree $d(\sigma)$ is then simply the number of ways in which we can extend this graph to a copy of $K_{\ell}$ in $K_{n}$. If $v(\sigma) > \ell$ then clearly $d(\sigma) = 0$, and otherwise
  $$ d(\sigma) = \binom{n-v(\sigma)}{{\ell}-v(\sigma)}\leq n^{{\ell}-v(\sigma)}. $$
  Note that $j\leq \binom{v(\sigma)}{2}$ implies that
  \[ v(\sigma)\geq \frac{1+\sqrt{1+8j}}{2}> \frac12+\sqrt{2j}\text,\]
  giving the bound
  \[\Delta_j\leq n^{\ell-1/2-\sqrt{2j}}\text.\]
  On the other hand, using that
  that $\ell\leq (\log{n})^{1/4}/2$ and that $n$ is sufficiently large, the average degree $d$ of $H$ is
  \[ d = \binom{n-2}{{\ell}-2} \geq \left(\frac{n}{{\ell}}\right)^{{\ell}-2}\ge n^{\ell-1.9}\text, \]
  so for $2\leq j \leq \binom{\ell}{2}$, we have
  \[ \frac{\Delta_j}{dp^{j-1}}\leq n^{1.4-\sqrt{2j}+(j-1)(\log{\ell})/(2\ell^2)}\text. \]
  Using the fact that $\log(\ell)/\ell\leq 1/e$ holds for all $\ell>0$, we have, for every $2\leq j\leq
  \binom{\ell}{2}$, that
  \[\sqrt{2j}-\frac{(j-1)(\log{\ell})}{2\ell^2} \ge
  \sqrt{2j}-\frac{(\log{\ell})\sqrt{j}}{2\sqrt{2}\ell} \ge \sqrt{2j}- \frac{\sqrt{j}}{2e\sqrt{2}}
  \geq 2-\frac{1}{2e}\text,\]
  whence, for sufficiently large $n$,
  \[ \frac{\Delta_j}{dp^{j-1}}\leq n^{1.4-2+1/(2e)}  \leq n^{-1/4}\text.\]
    Then, using $\ell \le (\log{n})^{1/4}/2$, for large enough $n$, we get
  \[ \Delta(H, p) \leq e^{\ell^4} \cdot \sum_{j = 2}^{\binom{\ell}{2}} 2^{- \binom{j - 1}{2}} \cdot
  n^{-1/4}
  \leq e^{\ell^4} n^{-1/4}
  \leq \delta/(ce^{\ell^4})\text,\]
  which easily implies the desired bound \eqref{eq:delta_c} on $\Delta(H, p)$. On the other hand, again
  using $\ell\le (\log{n})^{1/4}/2$, we have
  \begin{equation}\label{eq:upper_p}
    p = n^{-(\log{\ell})/(2\ell^2)}\leq 1/(c\ell^{4\ell^2})
  \end{equation}
  for all large enough $n$, so $p$ satisfies \eqref{eq:p_c}.
  Therefore, we can apply Theorem~\ref{thm:container} with parameters $\varepsilon$ and $p$.
  
  We now turn to the construction of the family $\mathcal{G}$. Let $\mathcal{C}$ be a collection of subsets of $V(H)$ given by Theorem~\ref{thm:container}. We show that the family of graphs
  $$\mathcal{G} = \{ ([n], C) : C \in \mathcal{C}\}$$
  satisfies the claim.
  
  Suppose that $I$ is some $K_{\ell}$-free graph on the vertex set $[n]$. Then its edge set $E(I)$ is an independent set in $H$, and thus there exists $C \in \mathcal{C}$ such that $E(I) \subseteq C$. Therefore there exists $G \in \mathcal{G}$ such that $I$ is a subgraph of $G$, and the property \textit{(i)} holds. Furthermore, since $e(H[C]) \leq \varepsilon e(H)$ for each $C \in \mathcal{C}$, it follows that the number of copies of $K_{\ell}$ in each $G \in \mathcal{G}$ is also bounded by $\binom{n}{\ell} / e^{\ell}$, satisfying property \textit{(ii)}.  It remains to show that $\log{|\mathcal{C}|} = o(n^2 / \ell)$, which then implies property \textit{(iii)}. Straightforward calculation yields that for large enough $n$, we have
  \begin{align*}
  \log{|\mathcal{C}|} &\leq c\binom{{\ell}}{2}^{3\binom{{\ell}}{2}}(1+\ell-\log\delta)\binom{n}{2}p\log(1/p) \\
  &\stackrel{\text{\eqref{eq:upper_p}}}{\leq} \ell^{3\ell^2}(1+\ell-\log\delta)n^2 \left( \ell^{-4\ell^2}\cdot \log (c\ell^{7\ell^2}) \right) \\
  &\leq \delta n^2/\ell,
  \end{align*}
  where in the second line, we used \eqref{eq:upper_p} together with the fact that $p\log (1/p)$ is monotonically decreasing. This finishes the proof of the corollary.
\end{proof}

The requirement that $\ell_n\leq (\log{n})^{1/4}/2$ cannot be significantly improved upon with the
same method.
Indeed, the requirement that $\Delta(H,p)=o(1)$ implies that
$2^{\binom{\binom{\ell_n}{2}}{2}}\Delta_2/d= o(1)$, which, since $\Delta_2/p=n^{-1+o(1)}$,
implies $\ell_n=O((\log{n})^{1/4})$. We also note that the proof shows that, in fact, we have
$\log{|\mathcal G|}= n^2e^{-\Omega(\ell_n^2\log{\ell_n})}$, which is much stronger than
the bound $\log{|\mathcal G|}=o(n^2/\ell_n)$ that we need for the proof of Theorem~\ref{thm:1}.

\section{Proof of Theorem \ref{thm:1}}

Let us start with the easy part -- proving the lower bound.
Consider the $(\ell_n-1)$-partite \emph{Turán graph}. That is, let $T$ be the complete $(\ell_n-1)$-partite graph of order $n$ whose partite sets have size either $\lceil n/(\ell_n-1) \rceil$ or $\lfloor n/(\ell_n-1) \rfloor$.
 Clearly, $T$ is a $K_{\ell_n}$-free graph, as is every subgraph of $T$. As there are at least
$$ 2^{e(T)} \geq 2^{(\frac{n}{{\ell_n}-1}-1)^2\binom{{\ell_n}-1}{2}} \geq
2^{(1-\frac{1}{{\ell_n}-1})\binom{n}{2}+o(n^2/{\ell_n})} $$
subgraphs of $T$, the lower bound on the number of $K_{\ell_n}$-free graphs of order $n$ follows.

Now we turn to proving the upper bound.
We show that for every $\delta > 0$ and large enough $n$, we have
$$ \log{f_n(K_{\ell_n})} \leq \left(1-\frac{1-\delta}{{\ell_n}-1}\right)\binom{n}{2}+\delta n^2/{\ell_n}. $$

We use the following Theorem of Lovász and Simonovits.
\begin{theorem}[{Lovász-Simonovits~\cite[Theorem 1]{ls83}}]
  \label{thm:supersat}
  Let $n$ and $\ell$ be positive integers. Then every graph of order $n$ with at least
  \[ \left(1-\frac{1}{t}\right)\frac{n^2}{2} \]
  edges contains at least
  $\left(\frac{n}{t}\right)^{\ell}\binom{t}{\ell}$
  copies of $K_{\ell}$.
\end{theorem}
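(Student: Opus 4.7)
The plan is to show that the Turán graph $T_{t,n}$ minimizes the number of $K_\ell$-copies among all $n$-vertex graphs with at least $(1-1/t)n^2/2$ edges, since this minimum is precisely $(n/t)^\ell \binom{t}{\ell}$ (up to divisibility corrections). The argument proceeds in three stages: symmetrization, part-count reduction, and a convexity estimate.

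First, I would apply Zykov symmetrization. Let $G$ be a graph minimizing $k_\ell(G) := |\{K_\ell \subseteq G\}|$ subject to $|V(G)|=n$ and $e(G)\ge(1-1/t)n^2/2$. For any two non-adjacent vertices $u,v$, compare the number of $K_\ell$-copies through $u$ with the number through $v$ (in the graph with the other one deleted); replacing the vertex lying in fewer such copies by a clone of the other does not increase $k_\ell$ and does not decrease $e(G)$, provided one picks the cloning direction correctly. Iterating, one may assume $G$ is complete multipartite, say with $r$ parts $V_1,\dots,V_r$ of sizes $n_1,\dots,n_r$ summing to $n$.

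Second, I would reduce to exactly $r=t$ parts. In the complete multipartite setting, $k_\ell(G)=\sum_{S\in\binom{[r]}{\ell}}\prod_{i\in S}n_i$ and $e(G)=\sum_{i<j}n_i n_j$. If $r>t$, a merging step (combining the two smallest parts into one) can be checked to decrease $e(G)$ by a controlled amount while decreasing $k_\ell$ by at least as large a relative amount; this is iterated down to $r=t$ while preserving the edge constraint. If $r<t$, a splitting argument or direct comparison to the Turán case shows the claimed bound holds trivially. Once $r=t$, Schur-convexity of the $\ell$-th elementary symmetric polynomial in $n_1,\dots,n_t$ under the constraint $\sum_{i<j}n_i n_j \ge (1-1/t)n^2/2$ implies that $k_\ell(G)$ is minimized when the $n_i$ are as equal as possible, giving the claimed lower bound $(n/t)^\ell\binom{t}{\ell}$.

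The main obstacle will be the second stage: verifying that the merging step simultaneously preserves the edge lower bound and weakly decreases $k_\ell$ requires careful inequalities on elementary symmetric polynomials, and the quantitative Schur-convexity step must extract the clean constant $\binom{t}{\ell}$ rather than a weaker asymptotic. Handling integrality of $n/t$ also requires minor rounding care, but contributes only lower-order terms.
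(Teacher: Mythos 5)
This statement is an external result cited from Lov\'asz and Simonovits~\cite{ls83}; the paper itself gives no proof, so there is nothing internal to compare against. Evaluating your argument on its own merits, it has two genuine gaps.

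First, the symmetrization step does not work as described. Writing $A_\ell(w)$ for the number of $K_\ell$'s through $w$ (and not through the other of $u,v$), a short computation shows that cloning $u$ over $v$ changes $e(G)$ by $d(u)-d(v)$ and changes $k_\ell(G)$ by $A_\ell(u)-A_\ell(v)$, while cloning in the other direction negates both changes. To make progress you need one direction that simultaneously satisfies $e$ non-decreasing and $k_\ell$ non-increasing, i.e.\ $d(u)\ge d(v)$ together with $A_\ell(u)\le A_\ell(v)$ (or the mirror pair). Nothing forces one of these two sign patterns to hold: it is perfectly possible that $d(u)>d(v)$ and at the same time $A_\ell(u)>A_\ell(v)$, and then neither cloning direction is admissible. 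The averaging identity $e(G_u)+e(G_v)=2e(G)$ and $k_\ell(G_u)+k_\ell(G_v)=2k_\ell(G)$ only guarantees that one direction helps each objective separately, not jointly. The standard repair is to symmetrize a Lagrangian combination $k_\ell(G)-\lambda e(G)$, for which one direction of cloning is always admissible; this is in fact much closer to what Lov\'asz and Simonovits actually do, but it is a different argument than the one you wrote and requires choosing and analyzing the multiplier $\lambda$.

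Second, the reduction to exactly $t$ parts is not available here, because $t$ is a real parameter, not necessarily an integer. Indeed, this is essential for the application in the paper, where the theorem is invoked with $t=(\ell-1)/(1-\delta)$ and a generalized binomial coefficient $\binom{t}{\ell}$. A ``Tur\'an graph with $t$ parts'' does not exist for non-integer $t$, and the threshold-edge-count extremal graph is then an unbalanced complete multipartite graph with $\lceil t\rceil$ parts (one of them small); your Schur-convexity step, which presupposes parts that can be equalized to size $n/t$, does not apply. Even in the integer case your merging step is problematic: combining two parts of sizes $n_i,n_j$ strictly decreases the edge count by $n_in_j$, so it does not preserve the constraint $e(G)\ge(1-1/t)n^2/2$ and one cannot simply ``iterate down to $r=t$ parts while preserving the edge constraint'' as claimed. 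In short, the high-level plan (symmetrize, reduce to complete multipartite, optimize part sizes) is the right family of ideas, but both the admissibility of the symmetrization moves and the handling of real $t$ need to be reworked before this is a proof.
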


Using Theorem~\ref{thm:supersat} together with Corollary~\ref{cor:container}, we can now finish the proof of Theorem~\ref{thm:1} as follows.
Fix some $\delta>0$ and assume that $n$ is large enough. Write $\ell:=\ell_n$ and apply Corollary~ \ref{cor:container} for $\delta := \delta ^{1/\delta}$.
We deduce that there exists a collection $\mathcal{G}$ of at most $2^{\delta n^2/\ell}$ graphs of
order $n$ such that each contains at most $\delta^{1/\delta}\binom{n}{\ell}/e^{\ell}$ copies of
$K_{\ell}$ and every $K_{\ell}$-free graph of order $n$ is a subgraph of some
$G \in \mathcal{G}$.

By Theorem \ref{thm:supersat}, if a graph $G$ of order $n$ has at least 
$$ \left(1-\frac{1-\delta}{{\ell}-1}\right)\frac{n^2}{2} $$
edges, then the number of copies of $K_{\ell}$ in $G$ is at least
$$ k(\ell) := \left(\frac{n(1-\delta)}{{\ell}-1}\right)^{\ell}\binom{({\ell}-1)/(1-\delta)}{{\ell}}.$$
If $\ell\geq 1/\delta$, then $(\ell-1)/(1-\delta)\geq \ell$ and we can use the bounds $ (\frac{a}b)^b\le \binom{a}b\le (\frac{ea}b)^b$ to obtain
\[k(\ell)\geq \frac{n^{\ell}}{{\ell}^{\ell}} > \binom{n}{{\ell}}/e^{{\ell}}\text.\]
For $\ell<1/\delta$, we use the definition of the (generalized) binomial coefficient to deduce that, in this case,
\[k(\ell)\geq \frac{n^{\ell}}{\ell!}\cdot \prod_{i=1}^{\ell-1}\left(1-\frac{i(1-\delta)}{\ell-1}\right)
> \delta^{1/\delta}\binom{n}{\ell}\text.\]
Since every $G\in\mathcal{G}$ has at most $\delta^{1/\delta}\binom{n}{\ell}/e^{\ell}$ copies of $K_{\ell}$,
we deduce that every $G\in\mathcal G$ has fewer than
$$ \left(1-\frac{1-\delta}{{\ell}-1}\right)\frac{n^2}{2} $$
edges. We can now count the number of $K_{\ell}$-free graphs by counting the number of subgraphs of order $n$ of the graphs in $\mathcal{G}$,
$$ f_n(K_{\ell}) \leq |\mathcal{G}| \cdot 2^{(1-\frac{1-\delta}{{\ell}-1})\frac{n^2}{2}}
= 2^{(1-\frac{1-\delta}{{\ell}-1})\frac{n^2}{2}+\delta n^2/{\ell}},$$
completing the proof of the theorem.

\bibliographystyle{siam}
\bibliography{refs}

\end{document}